\newtheorem{thm}{Theorem}[section]
\newtheorem{cor}[thm]{Corollary}
\newtheorem{prop}[thm]{Proposition}
\theoremstyle{definition}
\theoremstyle{remark}
\newtheorem{rem}[thm]{Remark}
\numberwithin{equation}{section}
\newcommand{\norm}[1]{\left\Vert#1\right\Vert}
\newcommand{\RR}{\mathbb{R}}
\newcommand{\CC}{\mathbb{C}}
\newcommand{\PP}{\mathbb{P}}
\newcommand{\dbar}{\bar{\partial}}
\newcommand{\zbar}{\bar{z}}
\newcommand{\pd}[2]{\frac{\partial #1}{\partial #2}}
\newcommand{\scal}[2]{\left\langle #1 , #2 \right\rangle}
\DeclareMathOperator{\Ric}{Ric}
\DeclareMathOperator{\grad}{grad}
\DeclareMathOperator{\Hess}{Hess}
\begin{document}

\title[A Lichnerowicz estimate]{A Lichnerowicz estimate for the first eigenvalue of convex domains in K\"{a}hler manifolds}%

\author{Vincent Guedj}
\address{Institut Universitaire de France et Institut de Math\'{e}matiques de Toulouse, Universit\'{e} Paul Sabatier,
31062 Toulouse cedex 09, France}
\email{vincent.guedj@math.univ-toulouse.fr}

\author{Boris Kolev}
\address{LATP, CNRS \& Universit\'{e} de Provence, 39 Rue F. Joliot-Curie, 13453 Marseille Cedex 13, France}
\email{kolev@cmi.univ-mrs.fr}

\author{Nader Yeganefar}
\address{LATP, Universit\'{e} de Provence, 39 Rue F. Joliot-Curie, 13453 Marseille Cedex 13, France}
\email{Nader.Yeganefar@cmi.univ-mrs.fr}


\subjclass[2010]{58C40, 53C21}%
\keywords{Lichnerowicz estimate, first eigenvalue, convex domains in K\"{a}hler manifold}%

\begin{abstract}
In this article, we prove a Lichnerowicz estimate for a compact \emph{convex} domain of a K\"{a}hler manifold whose Ricci curvature satisfies $\Ric \geq k$ for some constant $k>0$. When equality is achieved, the boundary of the domain is totally geodesic and there exists a nontrivial holomorphic vector field.

We show that a ball of sufficiently large radius in complex projective space provides an example of a \emph{strongly pseudoconvex} domain which is \emph{not convex}, and for which the \emph{Lichnerowicz estimate} fails.
\end{abstract}

\maketitle

\section{Introduction}
\label{sec:intro}

Let $(M^{n},g)$ be a compact $n$-dimensional Riemannian manifold. Assume first that $M$ has no boundary. A theorem of Lichnerowicz \cite{Lic58} asserts that if the Ricci curvature $\Ric$ of $M$ satisfies $\Ric \geq k$ for some constant $k>0$, then the first nonzero eigenvalue $\lambda$ of the Laplace operator satisfies
\begin{equation}\label{equ:lichnerowicz}
    \lambda \geq \frac{n}{n-1}k.
\end{equation}

Here, $nk/(n-1)$ should be viewed as the first nonzero eigenvalue of the round $n$-dimensional sphere $S^{n}(k/(n-1))$ of constant curvature $k/(n-1)$. Moreover, by a result of Obata \cite{Oba62}, the equality case in \eqref{equ:lichnerowicz} is obtained if and only if $M$ is isometric to this sphere. Reilly considered a similar problem, but for compact manifolds with boundary \cite{Rei77}. Namely, he proved that if $M$ is as in Lichnerowicz theorem, except that it has now a boundary such that its mean curvature with respect to the outward normal vector field is nonnegative, then the first eigenvalue $\lambda$ of the Laplace operator with the Dirichlet boundary condition still satisfies \eqref{equ:lichnerowicz}. He also proved that the equality case characterizes a hemisphere in $S^{n}(k/(n-1))$.

\smallskip

In another direction, Lichnerowicz showed that for K\"{a}hler manifolds, his estimate \eqref{equ:lichnerowicz} can be improved. To explain this, we modify slightly our normalization conventions: we consider a closed K\"{a}hler manifold $M$ of \emph{real} dimension $n=2m$, whose Ricci curvature satisfies $\Ric \geq k>0$. On a K\"{a}hler manifold, there are \textit{a priori} three Laplace operators, namely the usual Laplace operator associated to exterior differentiation, and two Laplace operators associated to $\partial$ and $\dbar$ respectively. Moreover, the latter two are actually equal and coincide with half the usual Laplacian. In the sequel, we will consider the Laplace operator associated with $\dbar$, and we will denote it by $\Delta$, so that
\begin{equation*}
    \Delta = \dbar^{*}\dbar + \dbar \dbar^{*}.
\end{equation*}
The first nonzero eigenvalue of $\Delta$ acting on functions will be denoted by $\lambda$. In view of these conventions, the condition $\Ric \geq k$ and the Lichnerowicz estimate \eqref{equ:lichnerowicz} give
\begin{equation*}
    \lambda \geq \frac{mk}{(2m-1)}.
\end{equation*}
Lichnerowicz improved this bound by showing that
\begin{equation*}
    \lambda \geq k
\end{equation*}
Moreover, if equality is achieved, then there is a non trivial holomorphic vector field on $M$.

\smallskip

The purpose of this note is to consider the case of compact K\"{a}hler manifolds with boundary. As in Reilly's result, we will have to impose some convexity property on the boundary:

\begin{thm}\label{thm:estimee}
Let $M$ be a compact convex domain in a K\"{a}hler manifold. Assume that the Ricci curvature satisfies $\Ric \geq k$ for some constant $k>0$. Then the first nonzero eigenvalue $\lambda$ of the Laplacian with Dirichlet boundary condition satisfies
\begin{equation*}
    \lambda \geq k.
\end{equation*}
Moreover, if equality is achieved, then the boundary $\partial M$ is totally geodesic and there is a nontrivial holomorphic vector field on $M$.
\end{thm}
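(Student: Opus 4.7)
The strategy is to adapt the classical Bochner--Weitzenb\"{o}ck proof of the Kähler--Lichnerowicz inequality on closed Kähler manifolds to the Dirichlet setting, exhibiting real convexity of $\partial M$ as the hypothesis that gives the right sign to the boundary integrals produced by integration by parts.

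Let $f$ be a first Dirichlet eigenfunction, so $\Delta f = \lambda f$ in $M$ with $f|_{\partial M}=0$. Set $\alpha := \dbar f$, a $(0,1)$-form satisfying $\Delta\alpha = \lambda\alpha$ (since $\dbar$ commutes with $\Delta$ on functions, because $\dbar^2=0$). Applying a Kähler-refined Bochner--Weitzenb\"{o}ck identity to $\alpha$ and integrating the pointwise identity, paired with $\alpha$, over $M$, the two integrations by parts (first to move $\Delta$ off $\alpha$, then to move the connection Laplacian) produce a boundary term $\mathcal{B}$, while the interior yields
\[
\lambda \int_M |\alpha|^2 \ \geq \ k \int_M |\alpha|^2 + (\text{nonnegative K\"{a}hler bulk term}) + \mathcal{B},
\]
where the Kähler bulk term vanishes pointwise precisely when the complex gradient of $f$ is holomorphic. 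The theorem then reduces to establishing $\mathcal{B} \geq 0$.

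The Dirichlet condition makes $df$ purely normal on $\partial M$, so $\alpha|_{\partial M} = (\nu f)(\dbar \rho)|_{\partial M}$ for any defining function $\rho$. Using $\nabla J = 0$ and the fact that $J\nu$ is automatically tangent to $\partial M$ (since $\scal{J\nu}{\nu}=0$), a direct computation expresses $\mathcal{B}$ as an integral over $\partial M$ of the second fundamental form $II$ of $\partial M$ evaluated on tangent vectors built from $\nu$, $J\nu$, and the complex tangent space $H := T\partial M \cap J(T\partial M)$, weighted by the nonnegative factor $(\nu f)^2$. Real convexity, $II \geq 0$ on all of $T\partial M$, then yields $\mathcal{B} \geq 0$ and hence $\lambda \geq k$. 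Crucially, the boundary integrand involves $II$ tested on directions that include $J\nu$, which is precisely what strong pseudoconvexity does not control; this is the source of the counterexample in $\CC\PP^m$.

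In the equality case $\lambda = k$ every inequality is sharp. Vanishing of the Kähler bulk term gives $\nabla^{(0,2)}f\equiv 0$, so $\xi^{j} := g^{j\bar k}\nabla_{\bar k} f$ defines a nontrivial holomorphic vector field on $M$. Vanishing of $\mathcal{B}$, combined with $\nu f \neq 0$ on $\partial M$ (Hopf boundary lemma applied to the sign-definite first eigenfunction), forces $II \equiv 0$ on $\partial M$, i.e.\ $\partial M$ is totally geodesic. The principal technical obstacle is the explicit reduction of $\mathcal{B}$ to an $II$-expression: one must rewrite $\nabla_\nu \alpha$ on $\partial M$ using the Dirichlet condition, the Kähler structure, and the decomposition $TM|_{\partial M} = \RR\nu \oplus \RR J\nu \oplus H$, and it is precisely this computation that singles out real convexity (as opposed to strict pseudoconvexity) as the correct hypothesis.
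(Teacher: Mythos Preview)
Your proposal is correct and follows essentially the same route as the paper: apply the Bochner identity $\Delta = (D'')^{*}D'' + \Ric$ to $\dbar f$, integrate against $\dbar f$, and show that the boundary term produced by integration by parts is nonnegative under convexity by rewriting it in terms of the second fundamental form (equivalently, $\Hess\rho$ on tangent directions $v_{1},Jv_{1},\dotsc,v_{m-1},Jv_{m-1},Jn$). The one genuine difference is in the equality case: you invoke the Hopf boundary lemma (using that a first Dirichlet eigenfunction has a sign) to get $\nu f \neq 0$ everywhere on $\partial M$, whereas the paper argues by contradiction via unique continuation for the elliptic operator $\Delta - \lambda$; your route is a bit more elementary, though note that what vanishes is only a particular trace-like combination of $II$, and one still needs the semidefiniteness from convexity to upgrade this to $II\equiv 0$.
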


\begin{rem}
As we will see in the proof, the convexity hypothesis may be relaxed into another condition of mean curvature type (see condition \eqref{equ:assez} below). However, this condition has no clear geometrical meaning, so that we have stated our theorem with the convexity hypothesis instead.
\end{rem}

\begin{rem}
It is natural to ask whether our result remains true if one assumes the pseudoconvexity of the boundary instead of its convexity. It turns out that a ball of sufficiently large radius in complex projective space provides an example of a strongly pseudoconvex domain which is not convex, and for which the Lichnerowicz estimate fails (see Proposition \ref{prop:cex} for more details on this).
\end{rem}

\begin{rem}
In the real setting, one can consider the Laplacian with the Neumann boundary condition, and again with the convexity condition, one can show that the Lichnerowicz estimate \eqref{equ:lichnerowicz} still holds for the first nonzero eigenvalue \cite{PMYC86}. In the K\"{a}hler setting, by using the method of proof of Theorem \ref{thm:estimee}, it should also be possible to prove that the conclusion of this theorem is true for the first nonzero eigenvalue of the $\dbar$-Laplacian with the absolute $\dbar$-condition on the boundary.
\end{rem}

An immediate consequence of our theorem is

\begin{cor}
Assume that $M$ is a strongly convex domain in a complex manifold which can be endowed with a K\"{a}hler metric whose Ricci curvature satisfies $\Ric \geq k$ for some constant $k>0$. Then the first nonzero eigenvalue $\lambda$ of the Laplacian with Dirichlet boundary conditions satisfies
\begin{equation*}
    \lambda > k.
\end{equation*}
\end{cor}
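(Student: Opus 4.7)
The plan is to obtain the corollary as an immediate consequence of Theorem~\ref{thm:estimee}, by ruling out the equality case under the stronger convexity hypothesis. Since strong convexity implies convexity, the hypotheses of Theorem~\ref{thm:estimee} are satisfied, and the theorem supplies the non-strict inequality $\lambda \geq k$. Everything therefore reduces to showing that equality cannot hold when $M$ is strongly convex.

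I would argue by contradiction. Suppose that $\lambda = k$. By the equality clause of Theorem~\ref{thm:estimee}, the boundary $\partial M$ must then be totally geodesic, which means that its second fundamental form vanishes identically on $\partial M$. However, the assumption that $M$ is \emph{strongly} convex is, by definition, the statement that the second fundamental form of $\partial M$ (with respect to the inward unit normal) is positive definite at every boundary point; in particular it is nowhere zero. These two properties of the same tensor are plainly incompatible, so the case of equality cannot occur and we conclude $\lambda > k$.

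The only step requiring real care is a consistency check between the two notions of convexity: one must confirm that the sign convention and choice of normal used in the definition of strong convexity match those employed in the proof of Theorem~\ref{thm:estimee}, so that ``totally geodesic'' and ``second fundamental form positive definite'' really are negations of one another on $\partial M$. Apart from this bookkeeping, the corollary poses no substantial obstacle; its content is entirely packaged in the equality characterization of the main theorem.
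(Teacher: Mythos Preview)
Your proposal is correct and matches the paper's approach exactly: the paper simply declares the corollary to be ``an immediate consequence'' of Theorem~\ref{thm:estimee} without writing out a separate proof, so the implicit argument is precisely the one you give---strong convexity forbids a totally geodesic boundary, ruling out the equality case.
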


Our proof follows the same strategy as in the original proofs of Lichnerowicz and Reilly. We use an appropriate Bochner formula for the Laplacian acting on $(0,1)$-forms and apply it to $\overline{\partial} f$, where the function $f$ is an eigenfunction of $\Delta$ for the first eigenvalue. After integrating the result on $M$ and integrating by parts, the desired eigenvalue estimate follows if we can prove that some boundary term is nonpositive, which is the case under the convexity hypothesis.


\section{Background material}
\label{sec:background}

In this section, we recall some well-known facts that will be used in the proof of our main result.

\subsection{Decomposition of the Hessian}

Let $f$ be a real valued smooth function on a K\"{a}hler manifold $(M,J,g)$. Its Riemannian Hessian $\Hess f$ is as usual defined by
\begin{equation*}
    \Hess f = \nabla df
\end{equation*}
This Hessian may be decomposed as the sum of a $J$-symmetric bilinear form and a $J$-skew-symmetric bilinear form. More specifically, we have
\begin{equation*}
    \Hess f = H^{1} f + H^{2} f
\end{equation*}
where for tangent vectors $A$ and $B$,
\begin{equation*}
    H^{1} f (A,B) = \frac{1}{2}\left\{\Hess f (A,B) + \Hess f (JA,JB)\right\}
\end{equation*}
and
\begin{equation*}
    H^{2} f (A,B) = \frac{1}{2}\left\{\Hess f (A,B) - \Hess f (JA,JB)\right\}
\end{equation*}

The two following facts may easily be checked:
\begin{enumerate}
 \item The $(1,1)$-form associated to $H^{1}f$ by the complex structure $J$ is $i\partial\bar{\partial}f$:
\begin{equation*}
    H^{1} f (JA,B) = i\partial\dbar f(A,B).
\end{equation*}
  \item In local coordinates, $H^{2}f$ has the following components
\begin{equation*}
    (H^{2}f)_{pq} = \overline{(H^{2}f)_{\bar{p}\bar{q}}} =
    \frac{\partial^{2}f}{\partial z_{p}\partial z_{q}} - \Gamma_{pq}^{r}\pd{f}{z_{r}},
\end{equation*}
and the other components vanish. $H^{2}f$ is called the \emph{complex Hessian}.
\end{enumerate}

\subsection{Bochner formula for the Laplacian}

Let $(M,g)$ be a K\"{a}hler manifold, and denote by $D$ its Levi-Civita connection. If $\alpha$ is a $(0,1)$-form, we denote by $D''\alpha$ the $(0,2)$-part of $D\alpha$. More precisely, $D\alpha$ is a section of the bundle $T^*M\otimes (T^*)^{0,1}M$; this bundle decomposes as a direct sum
\begin{equation*}
    ((T^*)^{1,0}M\otimes (T^*)^{0,1}M)\oplus ((T^*)^{0,1}M\otimes (T^*)^{0,1}M),
\end{equation*}
and $D''\alpha$ is the projection of $D\alpha$ on the second factor of this decomposition. In local complex coordinates, we have
\begin{equation*}
    (D''\alpha )_{\bar{p}\bar{q}} = \pd{\alpha_{\bar{q}}}{\zbar_{p}} - \Gamma_{\bar{p}\bar{q}}^{\bar{r}}\alpha_{\bar{r}}.
\end{equation*}
Let now $(D'')^*$ be the formal adjoint of $D''$. For a section $\beta$ of $(T^*)^{0,1}M\otimes (T^*)^{0,1}M$ one can see that locally
\begin{equation*}
    ((D'')^{*}\beta)_{\bar{p}} = - g^{q\bar{r}} \pd{\beta_{\bar{r}\bar{p}}}{z_{q}}.
\end{equation*}
Then we have the following Bochner formula for the $\dbar$-Laplacian $\Delta$ acting on $(0,1)$-forms:
\begin{equation}\label{equ:Bochner}
    \Delta = (D'')^*D''+\Ric .
\end{equation}
For future reference, we also give the integration by parts formula for $D''$ in the presence of a boundary (see e.g. \cite[Proposition 9.1]{Tay96}). Here, we assume that $M$ is compact, and we let $n$ denote the outward unit normal vector field on $\partial M$. The $(0,1)$ part of the dual $1$-form $\nu$ corresponding to $n$ by the metric will be denoted by $\nu ^{0,1}$. Finally, we let $\sigma$ denote the measure induced on the boundary by the metric. For smooth $\alpha$ and $\beta$, we then have
\begin{equation}\label{equ:IPP}
    \scal{D''\alpha}{\beta}_{L^{2}(M)} = \scal{\alpha}{(D'')^*\beta}_{L^{2}(M)} + \int _{\partial M} \scal{\nu ^{0,1}\otimes \alpha}{\beta} \, \sigma .
\end{equation}


\section{Estimate of the first eigenvalue}
\label{sec:estimate}

In this section, $M$ is a compact smooth domain in a K\"{a}hler manifold of complex dimension $m$, with metric $g$ and Ricci curvature bounded from below by some positive constant $k$. The outward unit normal vector field  on the boundary $\partial M$ is denoted by $n$ and its covariant associated $1$-form by $\nu$.

\subsection{Bochner formula and the first eigenvalue}
\label{subsec:Bochner_formula}

Let $f$ be a real valued eigenfunction of $\Delta$ corresponding to the first nonzero eigenvalue $\lambda$ of $\Delta$, so that $f: \overline{M}\to \RR$ is smooth, vanishes on the boundary $\partial M$, and satisfies $\Delta f=\lambda f$. (Note that it is possible to choose $f$ to be real valued, because $\Delta$ is equal to half the usual Laplacian.) We write the Bochner formula \eqref{equ:Bochner} for the $(0,1)$-form $\dbar f$ and take the $L^{2}$-inner product of the resulting equality with $\dbar f$ itself:
\begin{equation}\label{equ:Bochner2}
    \scal{\Delta \dbar f}{\dbar f} _{L^{2}(M)} = \scal{(D'')^*D''\dbar f}{\dbar f}_{L^{2}(M)} + \int _M \Ric{(\dbar f, \dbar f)} .
\end{equation}
Using the fact that $\Delta \dbar = \dbar \Delta$ and that $f\vert_{\partial M}=0$, we can integrate by parts the left hand side of \eqref{equ:Bochner2} to get
\begin{equation*}
\begin{split}
  \scal{\Delta \dbar f}{\dbar f} _{L^{2}(M)} & = \scal{\dbar \Delta f}{\dbar f}_{L^{2}(M)} \\
    & =  \scal{\dbar (\lambda f)}{\dbar f}_{L^{2}(M)} \\
    & = \lambda \scal{\Delta f}{f}_{L^{2}(M)} \\
    & = \lambda^{2} \norm{f}^{2}_{L^{2}(M)} .
\end{split}
\end{equation*}
We can deal with the Ricci term in the right hand side of \eqref{equ:Bochner2} in a similar way
\begin{equation*}
\begin{split}
  \int_M \Ric{(\dbar f, \dbar f)} & \ge k \scal{\dbar f}{\dbar f}_{L^{2}(M)} \\
    & = k \scal{\Delta f}{f}_{L^{2}(M)} \\
    & = k \lambda \norm{f}^{2}_{L^{2}(M)}.
\end{split}
\end{equation*}
Finally, we can integrate by parts the first term in the right hand side of \eqref{equ:Bochner2} (see formula \eqref{equ:IPP}):
\begin{equation}\label{equ:IPP1}
    \scal{(D'')^*D''\dbar f}{\dbar f}_{L^{2}(M)} = \norm{D'' \dbar f}^{2}_{L^{2}(M)} - \int_{\partial M} \scal{D''\dbar f}{\nu ^{0,1} \otimes \dbar f}\, \sigma ,
\end{equation}
and combining this with our previous estimates, we obtain
\begin{equation}\label{equ:egalite}
    \lambda (\lambda -k)\norm{f}^{2}_{L^{2}(M)} \geq \norm{D'' \dbar f}^{2}_{L^{2}(M)} - \int _{\partial M} \scal{D''\dbar f}{\nu ^{0,1} \otimes \dbar f}\, \sigma .
\end{equation}
As a consequence, if we set
\begin{equation*}
    I = -\int _{\partial M} \langle D''\dbar f , \nu ^{0,1} \otimes \dbar f\rangle\, \sigma ,
\end{equation*}
we will get  $\lambda \geq k$ provided we can prove that $I\geq 0$. In the next subsection, we will see that this is indeed the case under suitable assumptions on the boundary.

\subsection{Boundary term}
\label{subsec:boundary_term}

To estimate the boundary term $I$, we first notice that as $f$ is real valued, we have
\begin{equation*}
    (D''\dbar f)_{\bar{p}\bar{q}} = (H^{2}f)_{\bar{p}\bar{q}}
\end{equation*}
so that
\begin{equation*}
    I = -\int_{\partial M} \scal{H^{2} f}{\nu^{0,1} \otimes \dbar f} \, \sigma
    = -\int_{\partial M} H^{2} f (n^{0,1} , (\partial f)^{\sharp} ) \, \sigma  .
\end{equation*}
We then choose a boundary defining function $\rho$ for $\partial M$. This means that $\rho$ is a smooth real valued function such that $M=\{ \rho \leq 0\}$, $\partial M=\{\rho =0\}$ and $d\rho$ does not vanish on $\partial M$. By multiplying $\rho$ by a suitable smooth positive function if necessary, we may assume that
\begin{equation*}
    n = \grad \rho .
\end{equation*}
Moreover, near a fixed (but arbitrary) point of the boundary $\partial M$, we fix a local orthonormal frame adapted to the complex structure $J$ which has the form
\begin{equation*}
    v_{1}, Jv_{1}, \dotsc , v_{m}, Jv_{m}=n=\grad\rho .
\end{equation*}
We also set
\begin{equation*}
    e_{p} = \frac{1}{\sqrt{2}}(v_{p} - iJv_{p}), \quad p=1, \dotsc , m.
\end{equation*}
Note that as $f$ vanishes on $\partial M$, its derivatives along tangent vectors to $\partial M$ also vanish and consequently
\begin{equation*}
    (\partial f)^{\sharp} = \frac{-i}{\sqrt{2}} (n\cdot f) \bar{e}_{m}, \qquad n^{0,1} = \frac{-i}{\sqrt{2}}\bar{e}_{m},
\end{equation*}
where $n \cdot f$ means $df(n)$. Therefore,
\begin{equation*}
    I = \frac{1}{2} \int_{\partial M} (n \cdot f) \Hess f (\bar{e}_{m}, \bar{e}_{m}) \, \sigma
\end{equation*}
which can be decomposed  as $I = I_{1} + i I_{2}$ with
\begin{equation*}
    I_{1} = \frac{1}{4}\int_{\partial M} (n \cdot f)\left[ \Hess f (Jn,Jn) - \Hess f (n,n) \right]\, \sigma
\end{equation*}
and
\begin{equation*}
    I_{2} = \frac{-1}{2}\int_{\partial M} (n \cdot f)\Hess f (Jn,n)\, \sigma  .
\end{equation*}
Actually $I_2$ vanishes because $I$ is a real number. (This follows from the fact that in equation  \eqref{equ:Bochner2}, the left hand side and the Ricci term are real numbers, so that the term involving $D''$ is also a real number. This implies, by equation \eqref{equ:IPP1}, that the boundary term $I$ is a real number as well. There is also a more conceptual reason for the vanishing of $I_2$, see section~\ref{subsec:direct_proof}).
We now turn our attention to $I_1$. As $\triangle f = \lambda f = 0$ on $\partial M$, the trace of $\Hess f$ is also zero on $\partial M$:
\begin{multline*}
    \Hess f (Jn,Jn) - \Hess f (n,n) = \sum_{k=1}^{m-1} \left[ \Hess f (v_{k},v_{k}) + \Hess f (Jv_{k},Jv_{k})\right] \\
    + 2\Hess f (Jn,Jn).
\end{multline*}
We notice that all vectors appearing in the right hand side are tangent to the boundary. For such a vector $u$, we have on $\partial M$
\begin{equation*}
\begin{split}
  \Hess f (u,u) & = - \scal{\nabla_{u}u}{n} (n \cdot f) \\
    & = \scal{\nabla_{u}n}{u} (n \cdot f) \\
    & = (n \cdot f) \Hess \rho (u,u).
\end{split}
\end{equation*}
This implies
\begin{multline}\label{equ:bord}
    I_{1} = \frac{1}{4}\int_{\partial M} (n \cdot f)^{2}\Big\{ \sum_{k=1}^{m-1} [\Hess \rho (v_{k},v_{k})+\Hess\rho (Jv_k, Jv_k)] \\
    + 2\Hess \rho (Jn,Jn) \Big\} \, \sigma  .
\end{multline}
If we assume that $\partial M$ is convex, then all terms in the integrand of the right hand side are nonnegative, so that $I=I_1\geq 0$ as desired. This proves that $\lambda \geq k$ in the convex case.

It remains to deal with the equality case. If we assume that $\lambda =k$, then by equation \eqref{equ:egalite}, we must have $D''\dbar f =0$ and $I=0$. On the one hand, $D''\dbar f=0$ means that the $(1,0)$-vector field associated to $\dbar f$ by the metric is a (nonzero) holomorphic vector field. On the other hand, from $I=0$, we infer that the integrand in equation~\eqref{equ:bord} has to vanish identically on the boundary:
\begin{equation*}
    (n \cdot f)^{2}\Big\{ \sum_{k=1}^{m-1} \{ \Hess \rho (v_{k},v_{k}) + \Hess \rho (Jv_k, Jv_k)\} + 2\Hess \rho (Jn,Jn) \Big\}=0.
\end{equation*}
Assume by contradiction that $\partial M$ is not totally geodesic (but still convex of course). Then the term between the brackets is positive at some point and we will get the vanishing of $n.f$ on an open subset of $\partial M$. But $f$ is in the kernel of the elliptic operator $\Delta -\lambda$ and vanishes on $\partial M$. By the unique continuation principle for elliptic operators (see e.g. \cite{BW93}), $f$ has to vanish on $M$ as well, which is absurd. Therefore, $\partial M$ is totally geodesic. This completes the proof of Theorem \ref{thm:estimee}.\\

\subsection{A direct proof that the boundary term is real}
\label{subsec:direct_proof}

The fact that
\begin{equation*}
    I_{2} = \frac{-1}{2}\int_{\partial M} (n \cdot f)\Hess f (Jn,n)\, \sigma
\end{equation*}
vanishes is also a consequence of the fact that the expression
\begin{equation*}
    (n \cdot f)\Hess f (Jn,n)\, \sigma = (n \cdot f)(Jn \cdot n \cdot f)\, \sigma
\end{equation*}
is an exact differential form on the closed manifold $\partial M$.  Indeed, the vector field $Jn = J \grad \rho$ is the Hamiltonian vector field associated to $\rho$. This means that if $\omega$ is the K\"{a}hler form, then
\begin{equation*}
    i_{Jn} \, \omega = - d\rho.
\end{equation*}
Hence
\begin{equation*}
    d \, i_{Jn} \,i_{n} \, \omega^{m} = - md(n \cdot \rho)\wedge \omega^{m-1} - m(m-1)d\rho \wedge d\,i_{n} \, \omega \wedge \omega^{m-2}
\end{equation*}

Let $j : \partial M \to M$ be the inclusion map. Since the functions $n \cdot \rho $ and $\rho$ are constant on $\partial M$, we have
\begin{equation*}
    j^{*}\left(d \, i_{Jn} \,i_{n} \, \omega^{m} \right) = 0.
\end{equation*}
Now, $Jn$ is a vector field defined on a neighborhood of  $\partial M$ whose restriction to $\partial M$ is tangent to $\partial M$, so that
\begin{equation*}
    j^{*}(i_{Jn} \,\beta) = i_{Jn}j^{*}(\beta)
\end{equation*}
for any differential form $\beta$. As a consequence, we get
\begin{equation*}
    d \, i_{Jn} j^{*}\left(i_{n} \, \omega^{m} \right) = 0.
\end{equation*}
Finally, we have
\begin{equation*}
    j^{*}\left(i_{n}\,\omega^{m}\right) = \sigma ,
\end{equation*}
and
\begin{equation*}
    d \, i_{Jn} \sigma = 0.
\end{equation*}
Defining a vector field $X$ by
\begin{equation*}
    X = \frac{1}{2}(n \cdot f)^{2} Jn,
\end{equation*}
it follows that on $\partial M$, we have
\begin{equation*}
    d\,i_{X} \, \sigma = (n \cdot f)(Jn \cdot n \cdot f)\sigma.
\end{equation*}


\section{Counter-example in the pseudoconvex case}
\label{sec:counter-example}

We use the notation introduced in the previous section. It is clear from the proof of Theorem \ref{thm:estimee} that in order to get the estimate $\lambda \geq k$, it is enough to assume that on the boundary we have
\begin{equation}\label{equ:assez}
    \sum_{k=1}^{m-1} \{ \Hess \rho (v_{k},v_{k}) + \Hess \rho (Jv_k, Jv_k)\} + 2\Hess \rho (Jn,Jn) \geq 0,
\end{equation}
and not necessarily the convexity of $\partial M$. We may rewrite this condition as
\begin{equation*}
    \sum_{k=1}^{m-1}  H^{1} \rho (v_{k},v_{k}) + \Hess \rho (Jn,Jn) \geq 0.
\end{equation*}
Here, $\sum_{k=1}^{m-1}  H^{1} \rho (v_{k},v_{k})$ is the trace of the Levi form of the boundary, which would be nonnegative if $\partial M$ were assumed to be only pseudoconvex. The extra term $\Hess \rho (Jn,Jn)$, however, can usually not be controlled in the pseudoconvex case. This suggests that the conclusion of Theorem~\ref{thm:estimee} does not generally hold in this case, as we now explain.

We consider here the complex $m$-dimensional projective space $\PP^m(\CC)$ equipped with the Fubini-Study metric normalized so that the holomorphic sectional curvature is 4 (the Einstein constant is thus $2(m+1)$ and the diameter is $\pi/2$).

\begin{prop}\label{prop:cex}
Fix some point $x\in \PP^m(\CC )$, some $r_0\in ]0,\pi /2[$, and let $M$ be the geodesic ball centered at $x$, of radius $r_0$.
\begin{itemize}
 \item[(i)] If $r_0\in ]\pi /4, \pi/2[$, then $M$ is strongly pseudoconvex, not convex.
 \item[(ii)] The first nonzero eigenvalue of $M$ with Dirichlet boundary conditions goes to $0$ as $r_0$ approaches $\pi /2$
\end{itemize}
\end{prop}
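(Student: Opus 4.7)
Part (i). The plan is to use the defining function $\rho(y) = r(y) - r_0$, where $r(y) = d(x, y)$; since $r_0 < \pi/2$ equals the injectivity radius at $x$, $r$ is smooth near $\partial M$ and $n = \grad r$. To read off the Hessian of $\rho$ on $\partial M$ I would compute via radial Jacobi fields: the tangent space at a boundary point splits as $\RR n \oplus \RR Jn \oplus T$ with $T$ a totally real $(2m-2)$-dimensional complement; the plane $\{n, Jn\}$ has sectional curvature $4$ (the holomorphic sectional curvature) and every plane $\{n, u\}$ with $u \in T$ has sectional curvature $1$. The corresponding Jacobi fields along a radial geodesic are $\tfrac{1}{2}\sin(2r)$ and $\sin(r)$, and the identity $\Hess r(X, X) = f'(r)/f(r)$ for parallel Jacobi coefficients then gives
\begin{equation*}
    \Hess\rho(Jn, Jn) = 2\cot(2r_0), \qquad \Hess\rho(u, u) = \cot(r_0) \text{ for every unit } u \in T.
\end{equation*}

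Two conclusions follow. First, the Levi form on the complex tangent space $T$ is the common value $\tfrac{1}{2}(\Hess\rho(u,u) + \Hess\rho(Ju, Ju)) = \cot(r_0) > 0$ for every $r_0 \in (0, \pi/2)$, so $\partial M$ is strongly pseudoconvex. Second, $Jn$ is tangent to $\partial M$ and $\Hess\rho(Jn, Jn) = 2\cot(2r_0)$ is strictly negative for $r_0 \in (\pi/4, \pi/2)$; so $M$ is not convex. This proves (i).

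Part (ii). The plan is to exhibit, as $r_0 \to \pi/2$, admissible test functions on $M$ whose Rayleigh quotients tend to zero. The complement $\PP^m(\CC) \setminus M$ is a tube of width $\delta := \pi/2 - r_0$ about the cut locus $C \cong \PP^{m-1}(\CC)$, which is a totally geodesic submanifold of \emph{real codimension two}. Writing $s(y) = d(y, C) = \pi/2 - r(y)$, so that $M = \{s > \delta\}$ and $\partial M = \{s = \delta\}$, I would fix a small constant $c \in (0, \pi/2)$ on which Fermi coordinates around $C$ are valid and take the logarithmic cutoff
\begin{equation*}
    \chi_\delta = 0 \text{ on } \{s \leq \delta\}, \quad \chi_\delta = \frac{\log(s/\delta)}{\log(c/\delta)} \text{ on } \{\delta \leq s \leq c\}, \quad \chi_\delta = 1 \text{ on } \{s \geq c\},
\end{equation*}
which is Lipschitz and vanishes on $\partial M$.

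In Fermi coordinates $(p, s, \theta)$ around $C$ the Riemannian volume element has the form $j(p, s, \theta)\, s\, ds\, d\theta\, dV_C(p)$ with $j$ bounded above and below on $\{s \leq c\}$; a direct computation gives $\int_M |\nabla \chi_\delta|^2 \leq \mathrm{const}/\log(c/\delta)$, while $\int_M \chi_\delta^2 \to \Vol(\PP^m(\CC))$ as $\delta \to 0$. Thus the Rayleigh quotient is $O(1/\log(c/\delta)) \to 0$ and the variational characterization of the first eigenvalue gives the claim. The subtle point is the choice of cutoff: a naive linear profile $\min\{1, (s-\delta)/\delta\}$ would only give a bounded Rayleigh quotient, and the logarithmic profile is what exploits the real codimension-two nature of $C$, in exact analogy with the classical fact that points have vanishing Newtonian capacity in $\RR^2$.
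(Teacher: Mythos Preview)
Your argument for (i) is essentially the paper's: compute $\Hess r$ via radial Jacobi fields in $\PP^m(\CC)$, read off the principal curvatures $\cot(r_0)$ (multiplicity $2m-2$) and $2\cot(2r_0)$, and observe that the Levi form stays positive while the $Jn$-direction becomes concave past $r_0=\pi/4$. The only slip is terminological: the subspace $T$ you describe is the maximal \emph{complex} (i.e.\ $J$-invariant) subspace of $T(\partial M)$, not a totally real one; you use it correctly two lines later when you call it ``the complex tangent space $T$'', so the computation is unaffected.

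For (ii) you take a genuinely different route. The paper simply invokes the Chavel--Feldman spectral convergence theorem: removing a shrinking tube around a codimension~$\ge 2$ submanifold leaves the Dirichlet spectrum converging to the closed spectrum of $\PP^m(\CC)$, whose bottom eigenvalue is $0$. Your argument is instead a direct capacity estimate: exploit that the cut locus $C\cong\PP^{m-1}(\CC)$ has real codimension two, so in Fermi coordinates the transverse volume element carries a factor $s\,ds$, and a logarithmic cutoff $\chi_\delta$ gives $\int|\nabla\chi_\delta|^2=O(1/\log(c/\delta))$ while $\int\chi_\delta^2\to\Vol(\PP^m(\CC))$. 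This is correct and more self-contained than citing \cite{CF78}; it is exactly the computation showing that codimension-two sets have zero Sobolev capacity, and it makes transparent \emph{why} codimension two is the threshold (your remark that a linear cutoff would fail is to the point). The paper's citation, on the other hand, gives more: not just $\lambda_1\to 0$ but convergence of the entire spectrum. Either approach suffices for the proposition as stated.
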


\begin{proof}
The first point is a well-known result. For completeness, we outline the proof here. Denote by $r$ the distance function from $x$, and set $\rho =r^{2}-r_0^{2}$, so that $\rho$ is a smooth defining function for $M$. We want to compute the eigenvalues of the Hessian of $\rho$. As
\begin{equation*}
    \Hess \rho = 2r \Hess r +2 dr\otimes dr,
\end{equation*}
we will only have to compute the eigenvalues of $\Hess r$. To do this, we proceed as in the proof of \cite[Theorem A, p.19]{GW79}. Recall that for a tangent vector $u$, the curvature $R(u,.)u$ of $\PP ^m(\CC )$ is given by (\cite[Proposition F.34]{BGM71})
\begin{equation*}
    R(u,.)u =
    \left\{
       \begin{array}{ll}
         0, & \hbox{on $\RR u$;} \\
         4\mathrm{Id}, & \hbox{on $\RR Ju$;} \\
         \mathrm{Id}, & \hbox{on the orthogonal complement of $(u, Ju)$.}
       \end{array}
     \right.
\end{equation*}

Let $\gamma$ be a normal geodesic starting from $x$. We can choose a parallel frame along $\gamma$ which has the form $v_1, Jv_1, \ldots, v_m, Jv_m =\grad r$. Using the explicit expression of $R$, it is then easy to check that the space of Jacobi fields $V$ along $\gamma$ satisfying $V(0)=0$ and $V\perp \dot{\gamma}$ has as basis $V _i= \sin{(r)}v_i$, $JV_i$, $i=1,\ldots , m-1$ and $V_m=\sin{(2r)}v_m$. Using the second variation formula, we see that $\Hess r$ is diagonalized in the basis $v_1, Jv_1, \ldots , v_m, Jv_m$ with eigenvalues $\cot{(r)}$ (of order $2m-2$), $2\cot(2r)$ and $0$. If $r=r_0\in ]\pi /4, \pi /2[$, we infer that the Levi form of $\rho$ is positive definite, being equal to $2r_0\cot{(r_0)}\rm{Id}$ on the Levi distribution. In other words, $M$ is strongly pseudoconvex. However, $M$ is not convex because the principal curvature $2\cot{(2r_0)}$ is negative.

As for the second point of our proposition, it is for example a consequence of a more general result due to Chavel and Feldman \cite[Theorem 1]{CF78} which states the following: let $X$ be a compact Riemannian manifold and let $X'\subset X$ be a submanifold. For small $\varepsilon >0$, let $X'_\varepsilon$ be the $\varepsilon$-neighborhood of $X'$ in $X$ and denote by $\Omega _\varepsilon$ the set $X\setminus X' _\varepsilon$. Let $(\lambda _j)$ be the spectrum of $X$ and let $(\lambda _j(\varepsilon ))$ be the spectrum of $\Omega _\varepsilon$ with Dirichlet boundary conditions. If the codimension of $X'$ in $X$ is at least $2$, then for all $j$, $\lambda _j(\varepsilon)\to \lambda _{j-1}$ as $\varepsilon \to 0$. In our case, we can take $X=\PP ^m(\CC )$ and $X'=\PP ^{m-1}(\CC )$ which we view as the cut locus of our fixed point $x$. If $\varepsilon = \pi /2-r_0$, then $\Omega _{\varepsilon}$ coincides actually with $M$ and we get (ii).
\end{proof}


\end{document}